\providecommand{\U}[1]{\protect\rule{.1in}{.1in}}
\numberwithin{equation}{section}
\newtheorem{theorem}{Theorem}[section]
\newtheorem{corollary}[theorem]{Corollary}
\newtheorem{example}[theorem]{Example}
\newtheorem{lemma}[theorem]{Lemma}
\newtheorem{proposition}[theorem]{Proposition}
\newtheorem{remark}[theorem]{Remark}
\begin{document}

\title[Function spaces  close to $L^\infty$ with associate space close  to  $L^1$]  {Construction of  function spaces close to  $L^\infty$  with  associate space  close to  $L^1$}

\author{David Edmunds, Amiran Gogatishvili and  Tengiz Kopaliani}

\address{ David Edmunds\\
Department of Mathematics\\
University of Sussex\\ 
Pevensey 2, North-South Road\\
Brighton BN1 9QH, United Kingdom}
\email{davideedmunds@aol.com}
\address{Amiran Gogatishvili \\
Institute of Mathematics of the Academy of Sciences of the Czech Republic \\
\'Zitna 25 \\
115 67 Prague 1, Czech Republic}
 \email{gogatish@math.cas.cz}

\address{Tengiz Kopaliani \\
Faculty of Exact and Natural Sciences\\
I. Javakhi\-shvili Tbilisi State University\\
 University St. 2\\
 0143 Tbilisi, Georgia}
\email{tengiz.kopaliani@tsu.ge}

\keywords{Banach function space, variable Lebesgue spaces, a.e. divergent Fourier series,  Hardy-Littlewood
maximal function} 
\subjclass[2000]{46E30, 42A20}

\thanks{The research was in part supported by the Shota Rustaveli National Science Foundation (SRNSF),  grant  no: 217282,  Operators of Fourier analysis in some classical and new function spaces.
 The research of A.Gogatishvili was partially supported by the grant P201/13/14743S of the Grant agency of the Czech Republic and RVO: 67985840.}
\date{}

\begin{abstract}
The paper introduces a variable exponent space $X$ which has in common  with
$L^{\infty}([0,1])$ the property that  the space  $C([0,1])$ of continuous functions on  $[0,1]$ is a closed linear subspace in it.
The associate space of $X$ contains both the Kolmogorov and the Marcinkiewicz
examples of functions in $L^{1}$ with a.e. divergent Fourier series.
\end{abstract}
\maketitle
\section{Introduction}

One of the fundamental facts about Fourier series is that $L^{1}\left(
\mathbb{T}\right)  $ (where as usual $\mathbb{T}$ denotes the one-dimensional
torus) is not especially pleasant in that, unlike $L^{p}\left(  \mathbb{T}%
\right)  $ when $p>1,$\ it contains a function with a Fourier series that is
almost everywhere divergent. This was first shown by Kolmogorov, who with
remarkable ingenuity constructed such a function. In fact his function belongs
to the space $L\log  \log L \left(  \mathbb{T}\right)  $ that is slightly smaller
than $L^{1}\left(  \mathbb{T}\right)  ,$ and its partial Fourier series
\ diverges unboundedly a.e. Some years later Marcinkiewicz gave an example of
a function in $L^{1}\left(  \mathbb{T}\right)  $ with a.e. divergent Fourier
series even though its partial sums were bounded; various other examples have
been given over the years. From this point of view the gulf between
$L^{1}\left(  \mathbb{T}\right)  $ and $\cup_{p>1}L^{p}\left(  \mathbb{T}%
\right)  $ is wide. The situation is different if, instead of the Lebesgue
spaces $L^{p}\left(  \mathbb{T}\right)  $ with $p>1$ we consider the so-called
variable exponent Lebesgue spaces on $\mathbb{T}.$ These have attracted
considerable attention in recent years, principally because of the role they
play in various applications, such as variational problems with integrands
having non-standard growth. To explain briefly what they are, given a
measurable $p:$ $\mathbb{T}\rightarrow\lbrack1,\infty),$ the Lebesgue space
$L^{p\left(  \cdot\right)  }\left(  \mathbb{T}\right)  $ with variable
exponent $p$ is the space of all measurable functions $f$ on $\mathbb{T}$ such
that for some $\lambda>0,$ $I(\lambda,f):=\int\nolimits_{\mathbb{T}}\left(
\left\vert f(x)\right\vert /\lambda\right)  ^{p(x)}dx<\infty;$ it becomes a
Banach space when endowed with the norm $\left\Vert f\right\Vert _{p(\cdot
)}:=\inf\left\{  \lambda>0:I(\lambda,f)\leq1\right\}  ;$ and it coincides with
the classical $L^{p}$ space when $p$ is constant. It turns out that
\[
L^{1}\left(  \mathbb{T}\right)  =\cup L^{p\left(  \cdot\right)  }\left(
\mathbb{T}\right)  ,
\]
where the union is taken over all measurable $p$ such that $p(x)>1$ a.e. Thus
any function with Fourier series that is divergent a.e. must belong to some
variable exponent space $L^{p\left(  \cdot\right)  }\left(  \mathbb{T}\right),$ 
and just as it is interesting to know that the Kolmogorov function belongs
to $L\log \log  L\left(  \mathbb{T}\right),$ so it is natural to find out to which
space $L^{p\left(  \cdot\right)  }\left(  \mathbb{T}\right) $ it belongs.

In this paper we show that there is a variable exponent space $L^{p\left(
\cdot\right)  }\left(  \mathbb{T}\right)  $, with $1<p(x)<\infty$ a.e., which has in common  with
$L^{\infty}\left(  \mathbb{T}\right)$ the property that  the space  $C\left(\mathbb{T}\right)$ 
of continuous functions on   $\mathbb{T}$ is a closed linear subspace in it.
Moreover, both the Kolmogorov and the Marcinkiewicz functions
belong to $L^{q\left(  \cdot\right)  }\left(  \mathbb{T}\right)  ,$ where
$1/q(x)=1-1/p(x)$ for all $x.$ As might be expected, some knowledge of the
process of construction of the exceptional functions is necessary, and we give
the crucial steps for the convenience of the reader.

\section{Preliminaries}

Let $\Omega$ be a non-empty open subset of $\mathbb{R}^{n};$ let
$\mathcal{M}(\Omega)$ be the set of all measurable and almost everywhere
finite real-valued functions on $\Omega;$ and given any measurable subset $E$
of $\Omega,$ denote by $\left\vert E\right\vert $ the Lebesgue $n-$measure of
$E$ and by $\chi_{E}$ its characteristic function. The open ball in
$\mathbb{R}^{n}$ with centre $x$ and radius $r$ will be denoted by $B(x,r).$
As usual, we say that a linear space $X=X(\Omega)\subset\mathcal{M}(\Omega),$
equipped with a norm $\left\Vert \cdot\right\Vert _{X},$ is a \textit{Banach
function space} (BFS) on $\Omega$ if whenever $f,f_{n},g\in\mathcal{M}%
(\Omega)$ $\left(  n\in\mathbb{N}\right)  ,$ the following axioms hold:

(P1) $\ 0\leq g\leq f$ a.e. implies that $\left\Vert g\right\Vert _{X}%
\leq\left\Vert f\right\Vert _{X};$

(P2) \ $0\leq f_{n}\uparrow f$ a.e. implies that $\left\Vert f_{n}\right\Vert
_{X}\uparrow\left\Vert f\right\Vert _{X};$

(P3) \ $\left\Vert \chi_{E}\right\Vert _{X}<\infty$ if $E\subset\Omega$ and
$\left\vert E\right\vert <\infty;$

(P4) \ given any $E\subset\Omega$ with $\left\vert E\right\vert <\infty,$
there is a constant $C_{E}>0$ such that for all $f\in X,$%
\[
\int\nolimits_{E}fdx\leq C_{E}\left\Vert f\right\Vert _{X}.
\]
The associate space $X^{\prime}$ of a BFS $X$ is the set of all $g\in
\mathcal{M}(\Omega)$ such that $f.g\in L^{1}(\Omega);$ when endowed with the
norm
\[
\left\Vert g\right\Vert _{X^{\prime}}:=\sup\left\{  \left\Vert f.g\right\Vert
_{L^{1}(\Omega)}:\left\Vert f\right\Vert _{X}\leq1\right\}
\]
it is a BFS on $\Omega.$ Moreover, $X^{\prime}$ is a closed, norm fundamental
subspace of the dual $X^{\ast}$ of $X.$ We refer to \cite{BeS} for basic
properties of Banach function spaces.

\ Let $f$ be a measurable, real-valued function on $\Omega.$ Its
\textit{non-increasing rearrangement} $f^{\ast}$ is defined by%
\[
f^{\ast}(t)=\inf\left\{  \lambda\in(0,\infty):\left\vert \left\{  x\in
\Omega:\left\vert f(x)\right\vert >\lambda\right\}  \right\vert \leq
t\right\}  ,t\in\left[  0,\left\vert \Omega\right\vert \right]  .
\]
A BFS $X$ is said to be \textit{rearrangement-invariant} (r.i.) if

(P5) \ $\left\Vert f\right\Vert _{X}=\left\Vert g\right\Vert _{X}$ whenever
$f^{\ast}=g^{\ast}.$

To every r.i. space $X$ $\left(  \Omega\right)  $ there corresponds a unique
r.i. space $\overline{X}\left(  \left(  0,\left\vert \Omega\right\vert
\right)  \right)  $ such that $\left\Vert f\right\Vert _{X(\Omega)}=\left\Vert
f^{\ast}\right\Vert _{\overline{X}\left(  \left(  0,\left\vert \Omega
\right\vert \right)  \right)  }$ for all $f\in X(\Omega).$ This space, endowed
with the norm
\[
\left\Vert f\right\Vert _{\overline{X}\left(  \left(  0,\left\vert
\Omega\right\vert \right)  \right)  }:=\sup_{\left\Vert g\right\Vert
_{X^{\prime}(\Omega)}\leq1}\int\nolimits_{0}^{\left\vert \Omega\right\vert
}f^{\ast}(t)g^{\ast}(t)dt,
\]
is called the \textit{representation space} of\ $X$ $\left(  \Omega\right)  .$

The \textit{fundamental function} of an r.i. space $X$ $\left(  \Omega\right)
$ is the map $\phi_{X}:[0,\left\vert \Omega\right\vert ]\rightarrow
\lbrack0,\infty)$ defined by
\[
\phi_{X}(t)=\left\Vert \chi_{(0,t)}\right\Vert _{\overline{X}\left(  \left(
0,\left\vert \Omega\right\vert \right)  \right)  }\text{ \ }\left(
t\in(0,\left\vert \Omega\right\vert ]\right)  ,\phi_{X}(0)=0.
\]

We now introduce various interesting subspaces of a BFS $X\left(
\Omega\right)  $. A function $f$ in $X$ is said to have \textit{absolutely
continuous norm }in $X$ if $\left\Vert f\chi_{E_{n}}\right\Vert _{X}%
\rightarrow0$ whenever $\left\{  E_{n}\right\}  $ is a sequence of measurable
subsets of $\Omega$ such that $\chi_{E_{n}}\downarrow0$ a.e. The set of all
such functions is denoted by $X_{a}.$ By $X_{b}$ is meant the closure of the
set of all bounded functions in $X.$ Following Lai and Pick \cite{LP}, a
function $f\in X$ is said to have \textit{continuous norm in }$X$ if for every
$x\in\Omega,$ $\lim_{\varepsilon\rightarrow0}\left\Vert f\chi_{B(x,\varepsilon
)}\right\Vert _{X}=0;$ the set of all these functions is written as $X_{c}.$
The connection between this notion and the compactness of Hardy operators from
a weighted BFS $(X,w)$ to $L^{\infty}$ is explored in \cite{LP}; for a
connection with unconditional bases in BFSs see \cite{Kop1}, \cite{Kop2}. \ In
general, the relation between the subspaces $X_{a},X_{b}$ and $X_{c}$ is
complicated:\ for example (see \cite{LN}), there is a BFS $X$ for which
$\left\{  0\right\}  =X_{a}\varsubsetneq X_{c}=X.$

We now focus on the case in which $\Omega$ is a bounded interval in the real
line, taken to be $\left(  0,1\right)  $ for simplicity, although the
arguments will work for any bounded interval $\left(  a,b\right)  .$ Let
$I=[0,1]$ and let $\mathcal{P}(I)$ be the family of all measurable functions
$p:I\rightarrow\lbrack1,\infty).$ When $p\in\mathcal{P}(I)$ we denote by
$L^{p(\cdot)}\left(  I\right)  $ the set of all measurable functions $f$ on
$I$ such that for some $\lambda>0,$%
\[
\int\nolimits_{0}^{1}\left(  \frac{\left\vert f(x)\right\vert }{\lambda
}\right)  ^{p(x)}dx<\infty.
\]
This set becomes a BFS when equipped with the norm%
\[
\left\Vert f\right\Vert _{p(\cdot)}:=\inf\left\{  \lambda>0:\int
\nolimits_{0}^{1}\left(  \frac{\left\vert f(x)\right\vert }{\lambda}\right)
^{p(x)}dx\leq1\right\}  ;
\]
it is often referred to as a (Lebesgue) space with variable exponent. When $p$
is constant, the space coincides with the standard space $L^{p}\left(
I\right)  .$ Spaces with variable exponent, and Sobolev spaces $W^{k,p(\cdot
)}$ based upon them not only have intrinsic interest but also have
applications to partial differential equations and the calculus of variations.
More details will be found in \cite{CUF1} and \cite{DHHR}. For the particular
BFS $X=L^{p(\cdot)}\left(  I\right)  $ the relation between it and its
subspaces $X_{a},X_{b}$ and $X_{c}$ was investigated in \cite{ELN}: we give
some of the results of that paper next.

\begin{proposition}
\label{Proposition 2.1}Let $p \in\mathcal{P}(I)$ and set $X=L^{p(\cdot
)}\left(  I\right)  .$ Then

(i) $X_{a}=X_{c};$

(ii) $X_{b}=X$ if and only if $p(\cdot)\in L^{\infty}(I);$

(iii) $X_{a}=X_{b}$ if and only if
\[
\int\nolimits_{0}^{1}A^{p^{\ast}(x)}dx<\infty\text{ for all }A>1,
\]
where $p^{\ast}$ is the  non-increasing rearrangement of $p$.
\end{proposition}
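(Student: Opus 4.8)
The plan is to prove the three characterizations by directly computing with the modular and the definitions of the three subspaces. Throughout, write $X = L^{p(\cdot)}(I)$ and recall that the norm is the Luxemburg norm built from the modular $\rho(f) = \int_0^1 |f(x)|^{p(x)}\,dx$.

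For part (i), I would show both inclusions $X_c \subseteq X_a$ and $X_a \subseteq X_c$. The inclusion $X_a \subseteq X_c$ is immediate, since shrinking balls $B(x,\varepsilon)$ give characteristic functions $\chi_{B(x,\varepsilon)}$ decreasing to $0$, so absolute continuity of the norm forces $\|f\chi_{B(x,\varepsilon)}\|_{p(\cdot)} \to 0$. The reverse inclusion is the substantive direction: starting from $f\in X_c$, I would take an arbitrary sequence $E_n$ with $\chi_{E_n}\downarrow 0$ a.e. and show $\|f\chi_{E_n}\|_{p(\cdot)}\to 0$. The natural mechanism is that continuity of the norm at each point, combined with a covering/compactness argument on the interval $I$ (which is compact), should let me cover $I$ by finitely many small balls on each of which the local modular contribution is controlled; then an Egorov-type or dominated-convergence argument on the modular $\int_{E_n}|f|^{p(x)}dx \to 0$ converts modular smallness into norm smallness. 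The key technical fact I will lean on is that in variable exponent spaces $\|g\|_{p(\cdot)}\to 0$ is equivalent to $\rho(g)\to 0$, so it suffices to control the modular.

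For part (ii), the statement $X_b = X \iff p\in L^\infty(I)$ should follow by bounding truncations. If $p$ is essentially bounded, with $p(x)\le P$ a.e., I would approximate $f\in X$ by its truncations $f_n = f\chi_{\{|f|\le n\}}$ and estimate $\rho(f - f_n) = \int_{\{|f|>n\}}|f|^{p(x)}dx$, which tends to $0$ by dominated convergence since the integrand is dominated by $|f|^{p(x)}\in L^1$; hence bounded functions are dense and $X_b = X$. For the converse, if $p\notin L^\infty$ I must exhibit a function in $X$ that cannot be approximated in norm by bounded functions. I would build such a witness concentrated on the sets $\{p > n\}$, which have positive measure for all $n$; a function that is large precisely where $p$ is large will lie in $X$ (by a convergent modular) yet stay a fixed distance from every bounded function, because truncating it at any height $n$ leaves a tail whose modular is bounded below.

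For part (iii), I would characterize when $X_a = X_b$ via the rearrangement condition $\int_0^1 A^{p^*(x)}\,dx < \infty$ for all $A>1$. The idea is that a bounded function $g$, say $\|g\|_\infty \le A$, has absolutely continuous norm exactly when $\|g\chi_{E_n}\|_{p(\cdot)}\to 0$ for $\chi_{E_n}\downarrow 0$, and the worst case modular is governed by $\int A^{p(x)}$ over small sets; passing to rearrangements shows the relevant finiteness is $\int_0^1 A^{p^*(x)}dx<\infty$. So the finiteness condition is precisely what guarantees every bounded function lies in $X_a$, giving $X_b = \overline{\{\text{bounded}\}} \subseteq X_a$; the reverse inclusion $X_a\subseteq X_b$ is general in this setting. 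For the necessity, failure of $\int_0^1 A^{p^*(x)}dx<\infty$ for some $A>1$ should produce a bounded function (with values near $A$ on a suitable set) that is not in $X_a$, breaking the inclusion. I expect the main obstacle to be part (iii): correctly transferring the pointwise/local conditions through the non-increasing rearrangement $p^*$ and verifying that the single parameter $A$ governing the sup-norm bound interacts with the rearrangement in exactly the claimed integral form. Managing the interplay between the Luxemburg norm and the modular on small sets, and ensuring uniformity in $A$, will require the most care.
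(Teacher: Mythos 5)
First, a point of reference: the paper itself gives no proof of this proposition --- it is quoted from the cited paper of Edmunds, Lang and Nekvinda \cite{ELN} --- so your proposal has to stand on its own, and as written it has a genuine gap, located exactly at what you call your ``key technical fact''. The equivalence $\|g_n\|_{p(\cdot)}\to 0\iff\rho(g_n)\to 0$ is \emph{false} when $p$ is unbounded, which is the only case in which the proposition has real content (and the case realized by the paper's exponent $\widetilde p$). What is true in general is only: $\|g_n\|\to 0$ implies $\rho(g_n)\to 0$, and $\|g_n\|\to 0$ iff $\rho(\lambda g_n)\to 0$ for \emph{every} $\lambda>0$. This breaks the substantive inclusion $X_c\subseteq X_a$ in (i): dominated convergence gives $\rho((f/\lambda_0)\chi_{E_n})\to 0$ only for those $\lambda_0$ with $\rho(f/\lambda_0)<\infty$, and upgrading that to norm convergence needs $\rho(\lambda f)<\infty$ for all $\lambda$, which is precisely the membership in $X_a$ you are trying to prove. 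Your covering argument cannot close the loop either: covering $I$ by $N(\varepsilon)$ balls each with $\|f\chi_B\|<\varepsilon$ only yields $\|f\chi_{E_n}\|\le N(\varepsilon)\varepsilon$, and $N(\varepsilon)\to\infty$ as $\varepsilon\to 0$. Indeed no norm-level covering argument can work, since it would apply to any BFS, while the paper itself cites the Lang--Nekvinda example of a BFS with $\{0\}=X_a\subsetneq X_c=X$. The correct route is to first prove the modular characterization $X_a=\{f:\rho(\lambda f)<\infty\ \text{for all}\ \lambda>0\}$ (for the nontrivial inclusion: if $\rho(\lambda_0 f)=\infty$, the sets $G_n=\{x:(\lambda_0|f(x)|)^{p(x)}>n\}$ decrease to a null set yet carry infinite modular, because the complement contributes at most $n$; hence $\|f\chi_{G_n}\|\ge 1/\lambda_0$), and then show $X_c$ is contained in this set by localization: if $\rho(\lambda_0 f)=\infty$, repeated bisection of $I$ produces nested intervals of infinite modular shrinking to a point $x_0$, so $\|f\chi_{B(x_0,\varepsilon)}\|\ge 1/\lambda_0$ for every $\varepsilon>0$ and $f\notin X_c$.

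The same issue propagates to (ii) and (iii), and in fact your plan is internally inconsistent: any witness $f\in X\setminus X_b$ of the kind you describe in (ii) is automatically a counterexample to the equivalence you lean on in (i), since $\rho(f/\lambda)<\infty$ for some $\lambda$ forces the scaled tail modulars $\rho((f/\lambda)\chi_{\{|f|>m\}})$ to vanish by dominated convergence while the tail norms stay bounded below. Concretely, for (ii) necessity one must exploit that failure: pick disjoint sets $S_j\subseteq\{n_j\le p\le K_j\}$ (you need $p$ large but \emph{bounded} on each $S_j$ to be able to solve for the heights) with $|S_j|$ so small that the solutions $b_j$ of $\int_{S_j}b_j^{p(x)}dx=1$ tend to infinity; then $f=\sum_j b_j\chi_{S_j}$ satisfies $\rho(f/2)\le\sum_j 2^{-n_j}<\infty$, so $f\in X$, while every tail $\{|f|>m\}$ contains some $S_j$ and hence has modular $\ge 1$ and norm $\ge 1$, so $f\notin X_b$. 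Your sufficiency argument in (ii) is sound, since boundedness of $p$ is exactly what validates the modular-to-norm step there. For (iii), your skeleton is right: $X_a\subseteq X_b$ is a general BFS fact, the integral condition says precisely that every constant (hence, by the lattice property and closedness of $X_a$, all of $X_b$) lies in $X_a$, and the rearrangement is cosmetic because $\int_0^1 A^{p^*(x)}dx=\int_0^1 A^{p(x)}dx$ by equimeasurability. But the necessity direction again requires the nontrivial half of the modular characterization of $X_a$: if $\int_0^1 A^{p(x)}dx=\infty$, the witness is the constant function $A$ together with the decreasing sets $G_n=\{p>\log n/\log A\}$, on which the modular remains infinite. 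In short, everything follows smoothly once you establish $X_a=\{f:\rho(\lambda f)<\infty\ \text{for all}\ \lambda>0\}$, and nothing follows from the modular--norm equivalence as you stated it.
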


Further understanding of these relations is given by the following examples
taken from \cite{ELN}.

\begin{example}
\label{Example 2.2} Let $n=1$ and $\Omega=(0,1/e).$ Then

(i) if $p(x)=x^{\alpha}$ with $\alpha<0,$ then $X_{a}\varsubsetneq X_{b};$

(ii) if $p(x)=(\log x^{-1})^{\alpha},$ then $X_{a}=X_{b}$ if $\alpha\in(0,1],$
and $X_{a}\varsubsetneq X_{b}$ if $\alpha>1.$
\end{example}

\section{A variable exponent Lebesgue space close to $L^{\infty}(I)$}

For the remainder of the paper we shall denote by $m$ the function defined,
for every $x\in(0,1]$ by
\[
m(x)=\chi_{(0,1/200)}(x)\log(1/x);
\]
$\left\{  \delta_{k}\right\}  $ will be a sequence of positive numbers with%
\begin{equation}
\lim_{k\rightarrow\infty}\delta_{k}=0\text{ and }\sum\nolimits_{k=1}^{\infty
}\int\nolimits_{0}^{\delta_{k}}\log(1/x)dx<\infty;\label{Eq 3.1}%
\end{equation}
and $\left\{  r_{k}\right\}  $ will be an enumeration of the rationals in
$I=[0,1],$ (or some dense set in $I$).  Moreover, $\widetilde{p}$ will be the function defined on $I$ by%
\begin{equation}
\widetilde{p}(x)=2+\sum\nolimits_{k=1}^{\infty}m(x-r_{k})\chi_{\left(
r_{k},r_{k}+\delta_{k}\right)  }(x).\label{Eq 3.2}%
\end{equation}
Using the elementary fact that
\[
\int\nolimits_{a}^{a+\delta}-\log xdx=\delta+\log\left\{  \left(  \frac
{a}{a+\delta}\right)  ^{a}(a+\delta)^{-\delta}\right\}  <\delta-\delta
\log\delta,
\]
it follows from the monotone convergence theorem and (\ref{Eq 3.1}) that
$\widetilde{p}(x)$ is finite a.e. on $I.$ Moreover,%
\[
1<\text{ ess inf }\widetilde{p}(x),\text{ ess sup }\widetilde{p}(x)=\infty,
\]
and%
\[
L^{\infty}(I)\subset L^{\widetilde{p}(\cdot)}(I)\subset L^{1}(I).
\]

To  investigate further properties of $L^{\widetilde{p}(\cdot)}(I).$ we prove following theorem, (as we know it is new).
 
\begin{theorem}
\label{theorem 3.1} Let $X$ be  a BFS on $I$.  The space $C(I)$ of continuous functions on $I$ is a
closed linear subspace of  $X$ if and only if  there exists a positive constant $c$ satisfying 
\begin{equation} \label{eq3.3}  \|\chi_{(a,b)}\|_X\ge c\quad \text{whenever} \quad  0\le a<b\le 1. 
\end{equation}
\end{theorem}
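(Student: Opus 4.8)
The plan is to recast the statement ``$C(I)$ is a closed linear subspace of $X$'' as a two-sided norm estimate and then read off each direction from monotonicity of the norm. The starting observation is that for a BFS on the finite-measure interval $I$ one automatically has $C(I)\subseteq X$ with a bounded embedding from the sup-norm: axioms (P1) and (P3) give, for every $f\in C(I)$, that $|f|\le\|f\|_\infty\chi_I$ a.e., hence $\|f\|_X\le\|\chi_I\|_X\|f\|_\infty$ with $\|\chi_I\|_X<\infty$. Thus the identity map $\iota\colon(C(I),\|\cdot\|_\infty)\to(C(I),\|\cdot\|_X)$ is a bounded linear injection, and the entire theorem reduces to controlling it from below, i.e.\ to a bound $\|f\|_X\ge c_1\|f\|_\infty$ on $C(I)$.

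For the direction assuming \eqref{eq3.3}, I would argue directly. Given $f\in C(I)$ with $\|f\|_\infty=M>0$, compactness of $I$ and continuity of $f$ produce a point $x_0$ with $|f(x_0)|=M$; for each $\eta\in(0,M)$ continuity yields an open subinterval $(a,b)\subseteq I$ containing $x_0$ (or abutting an endpoint when $x_0\in\{0,1\}$) on which $|f|>M-\eta$, so that $|f|\ge(M-\eta)\chi_{(a,b)}$ a.e. Monotonicity (P1) together with the hypothesis then gives $\|f\|_X\ge(M-\eta)\|\chi_{(a,b)}\|_X\ge(M-\eta)c$, and letting $\eta\to0$ yields $\|f\|_X\ge c\|f\|_\infty$. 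Combined with the automatic upper bound, the $X$-norm and the sup-norm are equivalent on $C(I)$; since $(C(I),\|\cdot\|_\infty)$ is complete, so is $(C(I),\|\cdot\|_X)$, and a complete subspace is closed in $X$.

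For the converse I would invoke the bounded inverse (open mapping) theorem: if $C(I)$ is closed in $X$, then $(C(I),\|\cdot\|_X)$ is a Banach space, so the bounded bijection $\iota$ onto its image has bounded inverse, giving $\|f\|_\infty\le C'\|f\|_X$ for all $f\in C(I)$. To test this against an arbitrary interval $(a,b)$, I would squeeze a continuous function beneath its characteristic function: take a trapezoidal $g\in C(I)$ supported in $[a,b]$, vanishing at the endpoints and attaining the value $1$, so that $0\le g\le\chi_{(a,b)}$ a.e.\ and $\|g\|_\infty=1$. Then (P1) and the inverse bound give $\|\chi_{(a,b)}\|_X\ge\|g\|_X\ge(1/C')\|g\|_\infty=1/C'$, so \eqref{eq3.3} holds with $c=1/C'$.

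The mechanical parts are the two routine constructions—the localization of $f$ near its maximum and the trapezoidal bump—while the one genuinely structural ingredient, and the step I would watch most carefully, is the appeal to the open mapping theorem in the converse: it is precisely what forces the two complete norms on $C(I)$ to be comparable, and without it closedness would not yield the quantitative lower bound.
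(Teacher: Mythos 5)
Your proof is correct and follows essentially the same route as the paper: for sufficiency, localization of a continuous function near its maximum combined with monotonicity (P1) and the hypothesis \eqref{eq3.3}, and for necessity, a Baire-category argument (you invoke the bounded inverse theorem where the paper uses the equivalent closed graph theorem) followed by squeezing a continuous bump function beneath $\chi_{(a,b)}$. If anything, your upper bound $\|f\|_X\le\|\chi_I\|_X\,\|f\|_\infty$ is slightly more careful than the paper's bare assertion $\|f\|_X\le\|f\|_{C(I)}$, which tacitly presumes the normalization $\|\chi_I\|_X\le 1$.
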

\begin{proof}
For sufficiency part  it is enough to show that there is a positive constant $C$ such that for every
$f\in C(I),$%
\begin{equation} \label{3.111}
C\left\Vert f\right\Vert _{C(I)}\leq\left\Vert f\right\Vert _{X}\leq\left\Vert f\right\Vert _{C(I)}.
\end{equation}
The second of these inequalities is clear. For the first, let $f\in C(I).$ There exists $x_{0}\in I$ such that $\left\Vert f\right\Vert
_{C(I)}=\left\vert f\left(  x_{0}\right)  \right\vert ;$ there exists
$\varepsilon>0$ such that $\left\vert f(x_{0})\right\vert \leq2\left\vert
f(x)\right\vert $ if $x\in\left(  x_{0}-\varepsilon,x_0+\varepsilon\right)  \cap
I:=E.$ Thus from (\ref{eq3.3}) we see that%
\[
\left\Vert f\right\Vert _{C(I)}=\left\vert f\left(  x_{0}\right)  \right\vert
\leq \frac1 c
\left\vert f\left(  x_{0}\right)  \right\vert \left\Vert \chi
_{E}\right\Vert _{X}\leq\frac 2c\left\Vert f\chi_{E}\right\Vert
_{X}\leq\frac 2c\left\Vert f\right\Vert _{X}.
\]

Necessity. If $C(I)$ is  a  closed subset of $X$, then by the  closed graph theorem, we have the  estimate \eqref{3.111}.
 Let given any interval $(a,b)\subset I $ be given,  if we   take a continuous function $g$ on $I$ such that $g\le \chi_{(a,b)}$ and $\|g\|_{L^\infty}=1$ we get \eqref{eq3.3}. 
\end{proof}

We now establish further properties of $L^{\widetilde{p}(\cdot)}(I).$

\begin{theorem}\label{theorem 3.2} For any $(a,b)\subset I$, we have 
\begin{equation}
\left\Vert \chi_{(a,b)}\right\Vert _{\widetilde{p}(\cdot)}>1/e. \label{Eq 3.3}%
\end{equation}
\end{theorem}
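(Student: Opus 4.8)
The plan is to work directly from the definition of the norm. For $f=\chi_{(a,b)}$ one has $|f(x)|=1$ on $(a,b)$ and $0$ elsewhere, so
\[
\left\Vert \chi_{(a,b)}\right\Vert _{\widetilde{p}(\cdot)}=\inf\Big\{\lambda>0:\ \int_{a}^{b}\lambda^{-\widetilde{p}(x)}\,dx\le 1\Big\}.
\]
Writing $\rho(\lambda):=\int_{a}^{b}\lambda^{-\widetilde{p}(x)}\,dx$, which is non-increasing in $\lambda$, it suffices to exhibit some $\lambda_{1}>1/e$ with $\rho(\lambda)>1$ for all $0<\lambda\le\lambda_{1}$; for then the set $\{\lambda:\rho(\lambda)\le 1\}$ defining the norm is contained in $(\lambda_{1},\infty)$, and $\left\Vert \chi_{(a,b)}\right\Vert _{\widetilde{p}(\cdot)}\ge\lambda_{1}>1/e$.

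First I would localise near a rational. Since $(a,b)$ is a nonempty open interval, density of $\{r_{k}\}$ gives an index $k$ with $a<r_{k}<b$. Fix any $\eta$ with $0<\eta<\min\{b-r_{k},\delta_{k},1/200\}$; then $(r_{k},r_{k}+\eta)\subset(a,b)$, and on this subinterval every term of the series \eqref{Eq 3.2} is nonnegative while the $k$th term equals $\log\tfrac{1}{x-r_{k}}$, so $\widetilde{p}(x)\ge 2+\log\tfrac{1}{x-r_{k}}$. Putting $c:=1/\lambda$ and using the identity $c^{\log(1/s)}=s^{-\log c}$, for $c\ge 1$ one obtains
\[
\rho(\lambda)\ \ge\ \int_{r_{k}}^{r_{k}+\eta}c^{\,\widetilde{p}(x)}\,dx\ \ge\ c^{2}\int_{0}^{\eta}s^{-\log c}\,ds .
\]

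The heart of the matter is the behaviour of $\int_{0}^{\eta}s^{-\log c}\,ds$ at the threshold $c=e$. For $\lambda\le 1/e$, that is $c\ge e$, one has $\log c\ge 1$, the integral diverges, and $\rho(\lambda)=\infty$; hence no $\lambda\le 1/e$ satisfies the modular constraint. To upgrade this to the strict inequality I would let $\lambda\downarrow 1/e$, i.e. $c\uparrow e$: then $\log c\uparrow 1^{-}$ and $\int_{0}^{\eta}s^{-\log c}\,ds=\eta^{\,1-\log c}/(1-\log c)\to\infty$, so $\rho(\lambda)\to\infty$. Choosing $\lambda_{1}>1/e$ with $\rho(\lambda_{1})>1$ and invoking the monotonicity of $\rho$ then furnishes the required $\lambda_{1}$, completing the argument.

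I expect the strictness to be the only real obstacle. The value $1/e$ is exactly the critical level at which the local integral passes from divergent to convergent, so one cannot simply evaluate the modular at $\lambda=1/e$ and quote the unit-ball property $\rho(f/\Vert f\Vert)\le 1$: here the essential supremum of $\widetilde{p}$ is infinite, so that property is not available. The limiting estimate above circumvents this by producing an explicit $\lambda_{1}>1/e$ past which the modular still exceeds $1$, and the only point needing care is the verification that the single logarithmic singularity of $\widetilde{p}$ at one rational $r_{k}\in(a,b)$ already forces $\rho$ to blow up, which is precisely what the lower bound by $c^{2}\int_{0}^{\eta}s^{-\log c}\,ds$ records.
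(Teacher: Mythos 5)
Your proof is correct and follows essentially the same route as the paper: pick a rational $r_k$ inside $(a,b)$ and use the logarithmic spike of $\widetilde{p}$ there to make the modular $\int_a^b(1/\lambda)^{\widetilde{p}(x)}\,dx$ diverge at $\lambda=1/e$. You are in fact slightly more careful than the paper on two points --- your truncation $\eta<\min\{b-r_k,\delta_k,1/200\}$ respects the cutoff in $m$, and your limiting argument $\lambda\downarrow 1/e$ genuinely delivers the strict inequality, whereas the paper only exhibits divergence of the modular at $\lambda=1/e$ itself, which on its own yields $\left\Vert\chi_{(a,b)}\right\Vert_{\widetilde{p}(\cdot)}\geq 1/e$.
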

\begin{proof}
Let $(a,b)\subset I$
and let $k\in\mathbb{N}$ be such that $a<s_{k}<$ $s_{k}+\delta_{k}<b$ for some
rational $s_{k}.$ Then
\begin{align*}
\int\nolimits_{a}^{b}\left(  \frac{1}{1/e}\right)  ^{\widetilde{p}(x)}dx  &
\geq\int\nolimits_{s_{k}}^{s_{k}+\delta_{k}}\left(  \frac{1}{1/e}\right)
^{\widetilde{p}(x)}dx\\
&\geq\int\nolimits_{s_{k}}^{s_{k}+\delta_{k}}\exp\left\{
\log\left(  \frac{1}{x-s_{k}}\right)  \right\}  dx\\
&  =\int\nolimits_{0}^{\delta_{k}}\frac{dx}{x}=\infty,
\end{align*}
and so we have \eqref{Eq 3.3}.
\end{proof}

\begin{corollary}
\label{cor3.1} The space $C(I)$ of continuous functions on $I$ is a
closed linear subspace of \ $L^{\widetilde{p}(\cdot)}(I).$
\end{corollary}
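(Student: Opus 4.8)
The plan is to obtain this as an immediate consequence of the two preceding theorems, so the argument should be essentially one line once the hypotheses are checked. First I would confirm that Theorem~\ref{theorem 3.1} is applicable with $X = L^{\widetilde{p}(\cdot)}(I)$. For this it suffices that $\widetilde{p}$, as defined in \eqref{Eq 3.2}, is an admissible variable exponent, i.e. that it belongs to $\mathcal{P}(I)$ and hence that $L^{\widetilde{p}(\cdot)}(I)$ is a BFS on $I$. This is already in hand: $\widetilde{p}$ is measurable, satisfies $\widetilde{p}(x) \geq 2 > 1$, and is finite almost everywhere by the monotone convergence argument following \eqref{Eq 3.2} together with the summability condition \eqref{Eq 3.1}.

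With that in place, I would simply quote the abstract criterion of Theorem~\ref{theorem 3.1}: $C(I)$ is a closed linear subspace of $X$ if and only if there is a positive constant $c$ with $\|\chi_{(a,b)}\|_X \geq c$ for every $(a,b) \subset I$. Theorem~\ref{theorem 3.2} provides precisely this uniform lower bound for $X = L^{\widetilde{p}(\cdot)}(I)$, since it asserts $\|\chi_{(a,b)}\|_{\widetilde{p}(\cdot)} > 1/e$ for all such intervals. Thus condition \eqref{eq3.3} is verified with the explicit constant $c = 1/e$, and the criterion delivers the conclusion.

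I do not expect a genuine obstacle here: the content of the corollary lies entirely in Theorems~\ref{theorem 3.1} and \ref{theorem 3.2}, and the corollary merely records their combination. The one point worth stating explicitly is that the bound $1/e$ furnished by Theorem~\ref{theorem 3.2} is independent of the interval $(a,b)$, so it really is the uniform positive lower bound required by \eqref{eq3.3}; no interval-by-interval estimate is needed.
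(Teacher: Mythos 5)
Your proposal is correct and follows exactly the paper's own route: combine the abstract criterion of Theorem~\ref{theorem 3.1} with the uniform lower bound $\|\chi_{(a,b)}\|_{\widetilde{p}(\cdot)} > 1/e$ from Theorem~\ref{theorem 3.2}, which verifies condition \eqref{eq3.3} with $c = 1/e$. Your additional remarks (checking that $L^{\widetilde{p}(\cdot)}(I)$ is a BFS and noting the uniformity of the constant) are sound housekeeping that the paper leaves implicit.
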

\begin{proof} Using Theorem~\ref{theorem 3.2} from  Theorem~\ref{theorem 3.1} we obtain that there is a positive constant $C$ such that for every
$f\in C(I),$%
\[
C\left\Vert f\right\Vert _{C(I)}\leq\left\Vert f\right\Vert _{\widetilde
{p}(\cdot)}\leq\left\Vert f\right\Vert _{C(I)}.
\]
From these estimates the  proof of corollary follows. 
\end{proof}

\begin{remark}
\label{Remark 3.2}
\end{remark}

With the conjugate exponent $p^{\prime}$ defined by $1/p(x)+1/p^{\prime}(x)=1$
\newline$\left(  x\in I,p\in\mathcal{P}(I)\right)  ,$ it is known that
$L^{p^{\prime}(\cdot)}(I)$ is isomorphic to the dual $\left(  L^{p(\cdot
)}(I)\right)  ^{\ast}$\ of $L^{p(\cdot)}(I)$ if and only if $p(\cdot)\in
L^{\infty}(I);$ when ess sup $p(x)=\infty,$ $L^{p^{\prime}(\cdot)}(I)$ is
isomorphic to a proper closed subspace of $\left(  L^{p(\cdot)}(I)\right)
^{\ast}.$ The space $L^{\widetilde{p}(\cdot)}(I)$ constructed above has some
properties similar to those of $L^{\infty}(I).$ For example, the dual of
$L^{\infty}(I)$ is the set of finitely additive measures that are absolutely
continuous with respect to Lebesgue measure. These functionals are extensions
to $L^{\infty}(I)$ of continuous linear functionals on $C(I).$ Since
$C(I)\ $is a closed linear subspace of $L^{\widetilde{p}(\cdot)}(I),$ it
follows from the Hahn-Banach theorem that any bounded linear functional on
$C(I)$ can be extended to $L^{\widetilde{p}(\cdot)}(I).$

\begin{remark}
\label{Remark 3.3}
\end{remark}

Given any interval $(a,b)\subset I,$ there is a no r.i. space $X((a,b))$
different from $L^{\infty}((a,b))$ such that $L^{\infty}((a,b))\subset
X((a,b))$ $\subset L^{\widetilde{p}(\cdot)}((a,b)).$ For by (\ref{Eq 3.3}),
there exists $C>0$ such that $\phi_{X}(t)\geq C$ for all $t\in(0,b-a),$ and
thus $X((a,b))=L^{\infty}((a,b)).$

It is clear that
 \[
L_{a}^{\widetilde{p}(\cdot)}(I)\neq\{0\}.
\]

For each $n\in\mathbb{N}_{0}$ let $E_{n}=\left\{  x\in I:n\leq
\widetilde{p}(x)<n+1\right\}  $ and let $\left\{  G_{n}\right\}  $ be a sequence of
disjoint sets such that$\ G_{n}\subset E_{n}$ and $\left\vert G_{n}\right\vert
<\exp\left(  -e^{n}\right)  ;$ define a function $g$ by
\[ g(x)=\sum\nolimits_{n=0}^{\infty}n\chi_{G_{n}}(x).\]
This function does not belong to $L^{\infty}(I),$ but since $\int
\nolimits_{0}^{1}\left(  Ag(x)\right)  ^{\widetilde{p}(x)}dx<\infty$ for every
$A>1,$ it is in $L_{a}^{\widetilde{p}(\cdot)}(I).$ 

Therefore 
\[ L^\infty \subsetneqq L_{a}^{\widetilde{p}(\cdot)}(I).
\]

\section{A variable exponent Lebesgue space close to $L^{1}(I)$}

Let $\widetilde{q}(\cdot)$ be the conjugate exponent of the function
$\widetilde{p}(\cdot)$ defined by (\ref{Eq 3.2}),  i.e. 
\[ \frac1{\widetilde{q}(\cdot)}+ \frac 1{\widetilde{p}(\cdot)}=1, \quad x\in I,
\]
with the convention that  $1/\infty=0$.
 Note that $\widetilde{q}(x)>1$ for
a.e. $x\in I,$ and that the essential infimum of $\widetilde{q}(x)$ on
every interval $\left(  a,b\right)  \subset I$ is $1;$ moreover,
$L^{\widetilde{q}(\cdot)}(I)$ can be identified with the associate space
$\left(  L^{p(\cdot)}(I)\right)  ^{\prime}.$

The conjugate of the function $x\longmapsto1+m\left(  x-r_{k}\right)
\chi_{\left(  r_{k},r_{k}+\delta_{k}\right)  }(x)$ on the interval $\left(
r_{k},r_{k}+\delta_{k}\right)  $ is $\widetilde{q}_{k},$ where $\widetilde{q}_{k}(x)=1+1/\log\left(
\frac{1}{x-r_{k}}\right)  :\ \ $ thus $\widetilde{q}_{k}(\cdot)$ satisfies the
estimates 
\[ 1- \frac{1}{\widetilde{q}_{k}(x)}\le \frac{1}{\log \frac{1}{\varepsilon}}, \quad  x \in \left(  r_{k}, r_{k}+\varepsilon \right) \quad \text{and} \quad 0<\varepsilon\le \delta_{k}.
\]
Therefore,
\begin{equation} \label{4.11}\varepsilon ^{\frac{1}{(\widetilde{q}_k)_{+}}}\le e \varepsilon,
\end{equation}
where $ (\widetilde{q}_k)_{+}$  is essential supremum of $\widetilde{q}_k (x)$ on the interval $\left(  r_{k}, r_{k}+\delta_{k}\right).$ 
As 
\[ 0<\|\chi _{(a,b)}\|_{\widetilde{p}(\cdot)}\le 1,\]
 by  Corollary 2.23   from \cite{CUF1}
  \[ |(a,b)|^{1/\left(\widetilde{q}_k\right)_{-}}\le   \|\chi _{(a,b)}\|_{\widetilde{q}_k(\cdot)} \le |(a,b)|^{1/\left(\widetilde{q}_k\right)_{+}}\]
and   using \eqref{4.11}, we have
\[\|\chi_{(r_k, r_k+\varepsilon )}\|{\widetilde{q}_k(\cdot)}\approx  \varepsilon, \quad \text{for every}\quad 0<\varepsilon\le \delta_k.\]

Since $\widetilde{q}(x)\leq q_{k}(x)$ on $\left(  r_{k},r_{k}+\varepsilon
\right)  $ when $0<\varepsilon\leq\delta_{k},$ we thus have
\begin{equation}
\varepsilon=\left\Vert \chi_{\left(  r_{k},r_{k}+\varepsilon\right)
}\right\Vert _{L^{1}}\lesssim \left\Vert \chi_{\left(  r_{k},r_{k}%
+\varepsilon\right)  }\right\Vert _{\widetilde{q}(\cdot)}\lesssim\left\Vert
\chi_{\left(  r_{k},r_{k}+\varepsilon\right)  }\right\Vert _{q_{k}(\cdot)}
\lesssim \varepsilon. \label{Eq 4.1}%
\end{equation}
Let $f$ be a non-negative decreasing step function on $\left(  r_{k}%
,r_{k}+\delta_{k}\right)  :$ this can be written as%
\[
f(x)=\sum\nolimits_{i=1}^{\infty}a_{i}\chi_{\left(  r_{k},x_{i}\right)  }(x)
\]
where $r_{k}+\delta_{k}=x_{1}>x_{2}>\cdots>x_{i}>\dots$ and each $a_{i}\geq0.$ Using
(\ref{Eq 4.1}) we obtain%
\begin{align*}
\left\Vert f\right\Vert _{L^{1}}  &  \lesssim \left\Vert f\right\Vert_{\widetilde{q}(\cdot)}=
 \left\Vert \sum\nolimits_{i=1}^{\infty}a_{i}\chi_{\left(  r_{k},x_{i}\right)  }\right\Vert _{_{\widetilde{q}(\cdot)}}\\
&\lesssim \sum\nolimits_{i=1}^{\infty}a_{i}\left\Vert \chi_{\left(  r_{k},x_{i}\right)
}\right\Vert _{_{\widetilde{q}(\cdot)}}
 \approx \sum\nolimits_{i=1}^{\infty}a_{i}\left(  x_{i}-r_{k}\right)\\
&=\left\Vert \sum\nolimits_{i=1}^{\infty}a_{i}\chi_{\left(  r_{k}, x_{i}\right)  }\right\Vert _{L^{1}}
=\left\Vert f\right\Vert _{L^{1}}.
\end{align*}
It follows that for every non-negative decreasing function $f$ on $\left(
r_{k},r_{k}+\delta_{k}\right),$
\begin{equation}
\left\Vert f\chi_{\left(  r_{k},r_{k}+\delta_{k}\right)
}\right\Vert _{_{\widetilde{q}(\cdot)}}\approx\left\Vert f\chi_{\left(
r_{k},r_{k}+\delta_{k}\right)  }\right\Vert _{L^{1}}. \label{Eq 4.2}%
\end{equation}

\begin{lemma}
\label{Lemma 4.1}Let $f=\sum\nolimits_{k=1}^{\infty}f_{k},$ where each $f_{k}$
is a non-negative, non-increasing function on $\left(  r_{k},r_{k}+\delta
_{k}\right)  $ that is zero outside $\left(  r_{k},r_{k}+\delta_{k}\right)  .$
Then
\[
\left\Vert f\right\Vert _{_{_{\widetilde{q}(\cdot)}}}\approx\left\Vert
f\right\Vert _{L^{1}},
\]
where $\approx$ means that the left-hand side is bounded above and below by
positive, constant multiples of the right-hand side that are independent of
the particular $f.$
\end{lemma}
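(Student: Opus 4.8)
The plan is to bootstrap from the single-block equivalence \eqref{Eq 4.2} to the full sum by exploiting the crucial fact that the supports $(r_k,r_k+\delta_k)$ may overlap, yet their total contribution is controlled by condition \eqref{Eq 3.1}. First I would observe that the lower bound $\|f\|_{L^1}\lesssim\|f\|_{\widetilde q(\cdot)}$ is immediate and requires no work beyond axiom (P4), or equivalently the embedding $L^{\widetilde q(\cdot)}(I)\subset L^1(I)$ noted in the text, since on a bounded interval a variable exponent space with exponent $\ge 1$ embeds into $L^1$ with a uniform constant. So the entire content is the reverse inequality $\|f\|_{\widetilde q(\cdot)}\lesssim\|f\|_{L^1}$.

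For the upper bound I would first reduce to the case of a single block. By \eqref{Eq 4.2}, for each fixed $k$ we have $\|f_k\|_{\widetilde q(\cdot)}\approx\|f_k\|_{L^1}$, since $f_k$ is non-negative and non-increasing on its support and vanishes outside it. The natural move is to estimate the norm of the sum by the sum of the norms using the triangle inequality, $\|f\|_{\widetilde q(\cdot)}\le\sum_k\|f_k\|_{\widetilde q(\cdot)}\lesssim\sum_k\|f_k\|_{L^1}$, and then to recognize $\sum_k\|f_k\|_{L^1}$. If the supports were disjoint this sum would be exactly $\|f\|_{L^1}$ and we would be done; the obstacle is precisely that the rationals $r_k$ are dense, so the intervals overlap heavily and $\sum_k\|f_k\|_{L^1}$ may be much larger than $\|f\|_{L^1}=\|\sum_k f_k\|_{L^1}$. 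The way I would control this is to work instead with the modular: I would show directly that $\int_0^1(|f(x)|/\lambda)^{\widetilde q(x)}\,dx\le 1$ for $\lambda\approx\|f\|_{L^1}$, using the pointwise lower bound $\widetilde q(x)\ge 1$ together with the fact that $\widetilde q(x)$ exceeds $1$ only on $\bigcup_k(r_k,r_k+\delta_k)$, where by \eqref{Eq 3.1} the excess is integrable. Concretely, splitting the modular integral over the set where $\widetilde q=1$ (contributing at most $\|f/\lambda\|_{L^1}\le 1$ by choice of $\lambda$) and the complementary set of small measure, and bounding $\widetilde q_k(x)=1+1/\log\frac1{x-r_k}$ on each block as in the derivation of \eqref{Eq 4.1}, the overlap issue dissolves because the modular, unlike the sum of norms, sees only the pointwise value of $f$ rather than double-counting.

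The step I expect to be the genuine obstacle is making the triangle-inequality route honest, or else replacing it entirely by the modular estimate sketched above; the danger is that naively summing the single-block norms discards the subadditivity of the integral and produces a divergent bound. The cleanest resolution is to avoid summing norms and instead estimate $\|f\|_{\widetilde q(\cdot)}$ through its modular, reducing everything on each block to the already-established two-sided bound \eqref{Eq 4.1} for characteristic functions and to the equivalence \eqref{Eq 4.2}, and then invoking \eqref{Eq 3.1} once to absorb the total measure of the overlap region into a fixed constant independent of $f$.
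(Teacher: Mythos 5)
Your inventory of the building blocks is right (the trivial lower bound via the embedding $L^{\widetilde{q}(\cdot)}(I)\subset L^{1}(I)$, and the single-block equivalence \eqref{Eq 4.2} with constants uniform in $k$), but the pivot of your plan rests on a mathematical error, and the route you discard is in fact the proof. You claim that because the intervals $(r_k,r_k+\delta_k)$ overlap, $\sum_k\|f_k\|_{L^1}$ ``may be much larger than'' $\|f\|_{L^1}$. This is false: every $f_k$ is non-negative, so by monotone convergence (Tonelli),
\[
\sum_{k=1}^{\infty}\|f_k\|_{L^1}=\sum_{k=1}^{\infty}\int_0^1 f_k(x)\,dx
=\int_0^1\sum_{k=1}^{\infty}f_k(x)\,dx=\|f\|_{L^1},
\]
with exact equality, no matter how the supports overlap; what is being summed are integrals of functions, not measures of sets, so there is no double counting. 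Consequently the triangle-inequality route closes in two lines:
\[
\|f\|_{\widetilde{q}(\cdot)}\le\sum_{k=1}^{\infty}\|f_k\|_{\widetilde{q}(\cdot)}
\lesssim\sum_{k=1}^{\infty}\|f_k\|_{L^1}=\|f\|_{L^1},
\]
where the middle step is \eqref{Eq 4.2} applied blockwise (its constant is independent of $k$ because \eqref{4.11} and \eqref{Eq 4.1} hold with absolute constants). This is precisely the paper's proof, which you rejected on the strength of a nonexistent obstacle.

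The substitute you sketch does not repair anything, because its key assertions about $\widetilde{q}$ are wrong. There is no set of positive measure where $\widetilde{q}=1$, and $\widetilde{q}$ exceeds $1$ \emph{off} $\bigcup_k(r_k,r_k+\delta_k)$ as well: there $\widetilde{p}=2$, hence $\widetilde{q}=2$, while inside the intervals $\widetilde{q}$ lies in $(1,2]$ and dips toward $1$ near each $r_k$ (you appear to have swapped the roles of $\widetilde{p}$ and $\widetilde{q}$). So the proposed splitting of the modular has nothing to split, and ``invoking \eqref{Eq 3.1} once to absorb the total measure of the overlap region'' is not a proof step: the modular of an infinite sum is not controlled by measures of overlaps, and where $f/\lambda>1$ the exponent $\widetilde{q}(x)>1$ inflates rather than tames the integrand. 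If you do insist on working at the level of the modular, the mechanism that actually works is convexity: with $\lambda=C\|f\|_{L^1}$, $t_k=\|f_k\|_{L^1}/\|f\|_{L^1}$ and $g_k=f_k/(C\|f_k\|_{L^1})$ one has $f/\lambda=\sum_k t_k g_k$, and Jensen's inequality for the convex functions $t\mapsto t^{\widetilde{q}(x)}$ gives
\[
\int_0^1\Bigl(\frac{f(x)}{\lambda}\Bigr)^{\widetilde{q}(x)}dx
\le\sum_{k=1}^{\infty}t_k\int_0^1 g_k(x)^{\widetilde{q}(x)}dx\le 1
\]
once $C$ dominates the constant in \eqref{Eq 4.2}. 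But this is exactly the standard proof of the countable triangle inequality for the Luxemburg norm — that is, the argument you set aside at the outset.
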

\begin{proof}
As in the proof of (\ref{Eq 4.2}),%
\begin{align*}
\left\Vert f\right\Vert _{L^{1}}  &  \lesssim  \left\Vert f\right\Vert
_{_{_{\widetilde{q}(\cdot)}}}=\left\Vert \sum_{k=1}^{\infty}%
f_{k}\right\Vert _{_{_{\widetilde{q}(\cdot)}}}\leq \sum_{k=1}%
^{\infty}\left\Vert f_{k}\right\Vert _{_{_{\widetilde{q}(\cdot)}}}\\
& \lesssim  \sum_{k=1}^{\infty}\left\Vert f_{k}\right\Vert _{L^{1}%
}=\left\Vert f\right\Vert _{L^{1}}.
\end{align*}
\end{proof}

The space $L^{\widetilde{q}(\cdot)}(I)$  is  subspace of  $L^{1}(I)$ since
$\widetilde{q}(x)>1$ a.e. on $I;$ although it has many bad properties it is
quite like $L^{1}(I)$ in various respects. A simple illustration of this is
given in the next theorem, in which it is shown that there is a function $f\in
L^{\widetilde{q}(\cdot)}(I)$ such that the Hardy-Littlewood maximal function
$Mf$ \ is not integrable over any interval, no matter how small, contained in
$I.$ [We recall that the Hardy-Littlewood operator $M$ is defined by
\[
Mf\left(  x\right)  =\sup_{Q\backepsilon x}\frac{1}{\left\vert Q\right\vert
}\int\nolimits_{Q}\left\vert f(y)\right\vert dy,
\]
where the supremum is taken over all intervals $Q\subset I$ that contain $x.$]
This stems from the bad oscillatory behaviour of $\widetilde{q}$ \ and the
fact that $\widetilde{q}$ is not continuous or strictly greater than $1$ in
any small interval.

\begin{theorem}
\label{Theorem 4.2}There exists $f\in L^{\widetilde{q}(\cdot)}(I)$ such that
the Hardy-Littlewood maximal function $Mf$ \ is not integrable in any interval
$\left(  a,b\right)  \subset I.$
\end{theorem}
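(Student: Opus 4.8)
The plan is to transplant, near each rational $r_k$, a scaled copy of the classical borderline function whose maximal function just fails to be locally integrable, and to use Lemma~\ref{Lemma 4.1} to keep the whole sum inside $L^{\widetilde{q}(\cdot)}(I)$. The relevant elementary fact is that on a small interval $(0,\tau)$ the function $y\mapsto 1/\big(y(\log(1/y))^2\big)$ is integrable, with $\int_0^{x}\frac{dy}{y(\log(1/y))^2}=1/\log(1/x)$, while its averages over $(0,x)$ are of size $1/\big(x\log(1/x)\big)$, which is not integrable at $0$ because $\int_0^{\tau}\frac{dy}{y\log(1/y)}=\infty$. Concretely I would fix, for each $k$, a number $\tau_k\in(0,\delta_k)$ so small that $\tau_k\le e^{-2}$ and $1/\log(1/\tau_k)\le 2^{-k}$, and set
\[
f_k(x)=\frac{1}{(x-r_k)\big(\log\frac{1}{x-r_k}\big)^{2}}\,\chi_{(r_k,r_k+\tau_k)}(x),\qquad f=\sum_{k=1}^{\infty}f_k .
\]
Since $y(\log(1/y))^2$ is increasing for $y<e^{-2}$, each $f_k$ is non-increasing on $(r_k,r_k+\tau_k)$, equals $0$ on $(r_k+\tau_k,r_k+\delta_k)$, and vanishes outside $(r_k,r_k+\delta_k)$; thus the hypotheses of Lemma~\ref{Lemma 4.1} hold, and
\[
\|f\|_{\widetilde{q}(\cdot)}\approx\|f\|_{L^1}=\sum_{k=1}^{\infty}\|f_k\|_{L^1}=\sum_{k=1}^{\infty}\frac{1}{\log(1/\tau_k)}\le\sum_{k=1}^{\infty}2^{-k}<\infty ,
\]
so that $f\in L^{\widetilde{q}(\cdot)}(I)$.

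Next I would bound $Mf$ from below near each $r_k$. Because $f\ge f_k\ge 0$ and $M$ is monotone, for $x\in(r_k,r_k+\tau_k)$ I can test $Mf(x)$ against the interval $Q_x=(r_k,2x-r_k)$, which contains $x$ and has length $2(x-r_k)$, giving
\[
Mf(x)\ge\frac{1}{2(x-r_k)}\int_{r_k}^{x}f_k(y)\,dy=\frac{1}{2(x-r_k)\log\frac{1}{x-r_k}} .
\]
Integrating and using the divergence noted above yields $\int_{r_k}^{r_k+\tau_k}Mf\,dx=\infty$ for every $k$. Finally, given an arbitrary $(a,b)\subset I$, the density of $\{r_k\}$ makes the index set $\{k:r_k\in(a,(a+b)/2)\}$ infinite, and since $\tau_k\to0$ there is such a $k$ with $\tau_k<(b-a)/2$; then $(r_k,r_k+\tau_k)\subset(a,b)$, whence $\int_a^b Mf\ge\int_{r_k}^{r_k+\tau_k}Mf=\infty$, which is the assertion.

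The step needing the most care is choosing a single building block $f_k$ that simultaneously (a) is non-increasing on all of $(r_k,r_k+\delta_k)$, so that Lemma~\ref{Lemma 4.1} is literally applicable, (b) is $L^1$-summable over $k$, and (c) is singular enough that its maximal function is non-integrable at $r_k$; these requirements are reconciled precisely by shrinking the support to $(r_k,r_k+\tau_k)$ with $\tau_k$ small as above. The other point to watch is the covering step at the end: one must use $\tau_k\to0$ (a consequence of $\delta_k\to0$) to fit an entire singular interval inside a prescribed $(a,b)$, not merely a rational point, since the blow-up of $Mf$ comes from integrating across the whole interval abutting $r_k$.
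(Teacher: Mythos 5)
Your proposal is correct and follows essentially the same route as the paper: the paper's building block $f(x)=\frac{d}{dx}\bigl(1/\log(1/x)\bigr)=1/\bigl(x(\log(1/x))^{2}\bigr)$ is exactly your $f_k$ (translated to each $r_k$), the membership in $L^{\widetilde{q}(\cdot)}(I)$ comes from Lemma~\ref{Lemma 4.1} in both arguments, and both use the lower bound $Mf(x)\gtrsim 1/\bigl((x-r_k)\log\frac{1}{x-r_k}\bigr)$ together with the density of $\{r_k\}$. The only (cosmetic) difference is that the paper ensures summability with weights $a_k$, $\sum a_k<\infty$, while you shrink the supports to $\tau_k$ with $1/\log(1/\tau_k)\le 2^{-k}$; your truncation at $e^{-2}$ also quietly repairs the paper's slightly inaccurate claim that the building block is decreasing on all of $(0,1/e)$.
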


\begin{proof}
Let $f$ be defined by%
\[
f(x)=\frac{d}{dx}\left(  \frac{1}{\log(1/x)}\right)  \text{ \ }\left(
x\in(0,1/e)\right)  .
\]
Then $f$ is non-negative, decreasing and integrable on $\left(  0,1/e\right)
;$ for $x\in(0,1/e)$ we have%
\[
Mf(x)\geq\frac{1}{x}\int\nolimits_{0}^{x}\frac{d}{dt}\left(  \frac{1}%
{\log(1/t)}\right)  dt=\frac{1}{x\log(1/x)}.
\]
The function $x\longmapsto\frac{1}{x\log(1/x)}$ is non-negative and
decreasing, but not integrable on $\left(  0,1/e\right)  .$

Now consider the function $g$ defined on $I$ by%
\[
g(x)=\sum\nolimits_{k=1}^{\infty}a_{k}f(x-r_{k})\chi_{\left(  r_{k}%
,r_{k}+\delta_{k}\right)  }(x),
\]
where each $a_{k}>0$ and $\sum\nolimits_{k=1}^{\infty}a_{k}<\infty.$ Use of
Lemma \ref{Lemma 4.1} shows that $f\in L^{\widetilde{q}(\cdot)}(I),$ but
$\left\Vert \chi_{(a,b)}Mf\right\Vert _{L^{1}}=\infty,$ no matter what
interval $\left(  a,b\right)  \subset I$ we choose.
\end{proof}

\bigskip

Note that conditions on the exponent function $p$ sufficient for the validity
of an inequality of the form%
\[
\left\Vert Mf\right\Vert _{_{1}}\leq C\left\Vert
f\right\Vert _{p\left(  \cdot\right)  }%
\]
are considered in \cite{CUF2},\cite{FMS} and \cite{Has},  in case when $p(\cdot)$   satisfies a decay condition  and when $p(\cdot)$ is close to $1$ in value.


\medskip

To provide further information about the properties of $L^{\widetilde{q}%
(\cdot)}(I)$ we recall that given a Banach space $X,$ a sequence $\left\{
\left(  f_{n},g_{n}\right)  \right\}  _{n\in\mathbb{N}}$ $\subset X\times
X^{\ast}$ is said to be a biorthogonal system if, for all $m,n\in\mathbb{N},$%
\[
\left\langle g_{n},f_{n}\right\rangle =1\text{ and }\left\langle g_{m}%
,f_{n}\right\rangle =0\text{ for }m\neq n,
\]
where $\left\langle \cdot,\cdot\right\rangle $ denotes the duality pairing in
$X.$ Given a biorthogonal system $\left\{  \left(  f_{n},g_{n}\right)
\right\}  _{n\in\mathbb{N}},$ the sequence $\left\{  f_{n}\right\}
_{n\in\mathbb{N}}$ is called fundamental in $X$ if the closure of 
$\operatorname{span}\left\{f_{n}:n\in\mathbb{N} \right\}  $ is $X;$ it is a basis of $X$ if for
every $f\in X,$%
\[
f=\sum\nolimits_{n=1}^{\infty}\left\langle g_{n},f\right\rangle f_{n},
\]
with convergence in the norm of $X.$ If $\left\{  f_{n}\right\}
_{n\in\mathbb{N}}$ is a basis of $X,$ then $\left\{  g_{n}\right\}
_{n\in\mathbb{N}}$ is a basis in  the closed  linear span  $\overline{\operatorname{span}\left \{g_n:\, n\in \mathbb{N} \right\}}$.

\begin{theorem}
\label{Theorem 4.3}Let $\left\{  \left(  f_{n},g_{n}\right)  \right\}
_{n\in\mathbb{N}}$ be a biorthogonal system in \newline$L^{\widetilde{q}%
(\cdot)}(I)\times\left(  L^{\widetilde{q}(\cdot)}(I)\right)  ^{\ast}$ and
suppose $\left\{  g_{n}\right\}  _{n\in\mathbb{N}}$ is fundamental in $C(I).$
If $\left\{  f_{n}\right\}  _{n\in\mathbb{N}}$ is a basis in $L^{\widetilde
{q}(\cdot)}(I),$ then $\left\{  g_{n}\right\}  _{n\in\mathbb{N}}$ is a basis
in $C(I).$
\end{theorem}

\begin{proof}
This follows immediately from Theorem \ref{theorem 3.1}.

\medskip
\end{proof}

Further results of this kind are given in \cite{Kop1} and \cite{Kop2}.

\section{\bigskip Almost everywhere divergence of Fourier series in
$L^{p(\cdot)}\left(  \mathbb{T}\right)  $}

We conclude the paper by exhibiting the role played by the spaces of variable
exponent that we have been considering in connection with functions with
almost everywhere divergent Fourier series. To fix the notation, we denote as usual
$\mathbb{R}/(2\pi\mathbb{Z})$ by $\mathbb{T},$ and associate with any function
$f\in L^{1}\left(  \mathbb{T}\right)  $ its Fourier series%
\[
f(x)\sim\sum\nolimits_{-\infty}^{\infty}\widehat{f}(k)e^{ikx},
\]
where%
\[
\widehat{f}(k)=\frac{1}{2\pi}\int\nolimits_{\mathbb{T}}f(x)\exp(-ikx)dx.
\]
The $n^{th}$ partial sum of the trigonometric Fourier series of $f$ is%
\[
S_{n}(x,f):=\sum\nolimits_{k=-n}^{n}\widehat{f}(k)e^{ikx}.
\]
In \cite{Kol}, Kolmogorov constructed his famous example of a function $f\in
L^{1}\left(  \mathbb{T}\right)  $ such that its partial sums $S_{n}(x,f)$
diverge unboundedly almost everywhere. Later, Marcinkiewicz \cite{Mar}
produced a function in which the Fourier series diverged a.e. even though the
partial sums were bounded. Kolmogorov's function belongs to $L\log\log L;$
Chen \cite{Che} gave examples of functions in $L(\log\log L)^{1-\varepsilon}, $
$(0<\varepsilon<1)$,  that have a.e. divergent Fourier series; and Konyagin \cite{Kon} produced
functions, with similar bad properties, in the space $L\phi(L),$ where
$\phi(t)=o\left(  \sqrt{\log t/\log\log t}\right)  .$

The function spaces between $L^{1}\left(  \mathbb{T}\right)  $ and $\cup
_{p>1}L^{p}\left(  \mathbb{T}\right)  $  $(p={\rm constant})$ play an important role in the problem
of the a.e. convergence of Fourier series, since every $f\in\cup_{p>1}%
L^{p}\left(  \mathbb{T}\right)  $ has an a.e. convergent Fourier series, while
as shown by Kolmogorov there is a function $f\in L^{1}\left(  \mathbb{T}%
\right)  $ with a.e. divergent Fourier series. Further discussion of this
point is given in \cite{Bar}. Turning now to spaces with variable exponent, we
remark that in contrast to the situation for classical Lebesgue spaces,%
\begin{equation}
L^{1}\left(  \mathbb{T}\right)  =\cup L^{p\left(  \cdot\right)  }\left(
\mathbb{T}\right)  , \label{Eq 5.1}%
\end{equation}
where the union is over all $p(\cdot)\in\mathcal{P}\left(  \mathbb{T}\right)
$ (defined just as $\mathcal{P}\left(  I\right)  $ was defined in section 2)
that are greater than $1$ a.e. To establish this claim, let $f\in L^{1}\left(
\mathbb{T}\right)  $ and for each $n\in\mathbb{N}$ define%
\[
E_{n}=\left\{  x\in\mathbb{T}:n-1\leq\left\vert f(x)\right\vert <n\right\}  ;
\]
plainly $\sum\nolimits_{n}n\left\vert E_{n}\right\vert <\infty.$ Let $\left\{
\varepsilon_{n}\right\}  $ be a sequence of positive numbers such that
\[
\sum\nolimits_{n}n^{1+\varepsilon_{n}}\left\vert E_{n}\right\vert <\infty;
\]
for example, we could take $\varepsilon_{n}=1/n.$ Now define
$p(t)=1+\varepsilon_{n}$ $\left(  t\in E_n\right)  .$ It is apparent
that $f\in L^{p(\cdot)}\left(  \mathbb{T}\right)  .$ In view of (\ref{Eq 5.1})
it is natural to seek to characterise those spaces $L^{p(\cdot)}\left(
\mathbb{T}\right)  $ that contain functions with a.e. divergent Fourier
series. To prepare for a discussion of this question we give some details of
the procedure used in the construction of the Kolmogorov and the Marcinkiewicz
functions. The following lemma (see \cite{Bar}) is crucial for the
construction of both examples: condition (iii) of the lemma is necessary only
for the Marcinkiewicz example; for the Kolmogorov example it is sufficient for
conditions (i), (ii) and (iv) to be satisfied.

\begin{lemma}
\label{Lemma 5.1}There is a sequence of functions $\phi_{n}$ satisfying the
following conditions:

(i) For all $n\in\mathbb{N},$%
\[
\phi_{n}\geq0\text{ and }\int\nolimits_{0}^{2\pi}\phi_{n}(x)dx=2;
\]

(ii) each $\phi_{n}$ has bounded variation;

(iii) there is a sequence of subsets $H_{n}$ of $\left[  0,2\pi\right]  ,$
with%
\[
\lim_{n\rightarrow\infty}\left\vert H_{n}\right\vert =2\pi,
\]
such that there exists $A$ with the property that for all $n,r\in\mathbb{N}$
and all $x\in H_{n},$%
\[
\left\vert S_{r}\left(  x,\phi_{n}\right)  \right\vert \leq A\log n;
\]

(iv) if $\varepsilon>0,$ there exist $\alpha>0$ and $N\in\mathbb{N}$ such that
given any $n>N$ there is a set $E_{n}\subset\left[  0,2\pi\right]  $ for which

(a) $\left\vert E_{n}\right\vert >2\pi-\varepsilon,$

(b) for any $x\in E_{n},$ there exists $r_{x}$ $\in\mathbb{N}$ such that
$\left\vert S_{r_{x}}\left( x,\phi_{n})\right)  \right\vert >\alpha\log n,$

(c) $n\leq r_{x}\leq m_{n},$ where $m_{n}$ depends only on $n$ but not on
$\varepsilon.$
\end{lemma}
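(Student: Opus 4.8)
The plan is to take each $\phi_n$ to be a smoothed discrete measure and to read off all four conditions from the behaviour of the Dirichlet kernel $D_r(t)=\sum_{|k|\le r}e^{ikt}=\sin((r+1/2)t)/\sin(t/2)$. First I would fix $n$, choose $n$ points $a_1<\dots<a_n$ in $[0,2\pi]$ placed according to Kolmogorov's (non-uniform) prescription, and consider the idealized measure $\mu_n=\frac{2}{n}\sum_{k=1}^n\delta_{a_k}$ of total mass $2$. With the normalization $\widehat{f}(m)=\frac1{2\pi}\int f e^{-imx}dx$ used in the paper, its partial sums are $S_r(x,\mu_n)=\frac{1}{\pi n}\sum_{k=1}^n D_r(x-a_k)$, so the whole problem reduces to estimating sums of translated Dirichlet kernels. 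To secure the bounded-variation requirement (ii) I would then replace each $\delta_{a_k}$ by a nonnegative bump of mass $2/n$ supported in a tiny interval about $a_k$, narrow enough that the kernel estimates below persist up to absolute constants; the resulting $\phi_n$ is a nonnegative step function of bounded variation with $\int_0^{2\pi}\phi_n=2$, which gives (i) and (ii) simultaneously.

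For the upper bound (iii) I would let $H_n$ be the complement of a union of small neighbourhoods of the $a_k$ whose total length tends to $0$, so that $|H_n|\to 2\pi$. For $x\in H_n$ the distance from $x$ to each $a_k$ is bounded below by a fixed multiple of the spacing, and the elementary bound $|D_r(t)|\le \pi/|t|$ reduces $\frac1n\sum_k|D_r(x-a_k)|$ to a harmonic-type sum of size $O(\log n)$, uniformly in $r$. This yields $|S_r(x,\phi_n)|\le A\log n$ for all $r$ and all $x\in H_n$, with $A$ absolute.

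The heart of the matter, and by far the hardest step, is the lower bound (iv): given $\varepsilon$ one must produce $\alpha>0$ and, for each large $n$ and for a set $E_n$ with $|E_n|>2\pi-\varepsilon$, an index $r_x\in[n,m_n]$ for which $|S_{r_x}(x,\phi_n)|>\alpha\log n$, equivalently $\sum_k D_{r_x}(x-a_k)\gtrsim n\log n$. This is exactly where the non-uniform, arithmetically precise placement of the $a_k$ is indispensable: for a generic (e.g.\ equally spaced) configuration the discrete sum with $r<n$ merely reproduces the mean $\frac1{2\pi}\int D_r\equiv 1$ and stays bounded, so no divergence survives. One instead arranges the points so that for each $x\in E_n$ there is a resonant index $r_x$ --- necessarily of order at least $n$, which is why (c) demands $r_x\ge n$ --- for which a whole block of the kernels $D_{r_x}(x-a_k)$ reinforces, driving the partial sum up like the Lebesgue constant of order $\log r_x$; concretely one follows the Fej\'er-polynomial mechanism, in which the reinforcing block contributes a harmonic sum $\sim\log n$. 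I expect the two genuinely delicate points to be (a) showing the remaining terms do not cancel this contribution, which is where Kolmogorov's precise choice of the $a_k$ is used, and (b) arranging that the window $[n,m_n]$ containing $r_x$, and the threshold $\alpha$, depend on $n$ alone and not on $\varepsilon$, so that shrinking $\varepsilon$ only enlarges $E_n$; this uniformity in (c) is the main obstacle, the bookkeeping for (i)--(iii) being routine. The full placement and estimates are carried out in \cite{Bar}.
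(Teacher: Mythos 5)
Your plan for (i)--(iii) is essentially the paper's: there $\phi_n$ is exactly a nonnegative step function of total mass $2$ concentrated on $n$ tiny intervals $\Delta_k$ around points $A_k$, the set $H_n$ is the complement of small neighbourhoods of those points, and (iii) follows from the elementary kernel bound $|L_r(t)|\le \pi/|t|$. One calibration slip, though: you cannot simultaneously have the excluded neighbourhoods of total length tending to $0$ and every $x\in H_n$ at distance from each point at least a fixed multiple of the spacing $\sim 1/n$. The correct choice, made in the paper, is to excise neighbourhoods of radius $1/(n\log n)$: their total length is $O(1/\log n)$, while the nearest kernel term then contributes at most about $\frac{1}{n}\cdot n\log n=\log n$, so the $O(\log n)$ bound in (iii) still holds.

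The genuine gap is in (iv), which you both defer entirely to \cite{Bar} and, more importantly, describe with the wrong mechanism. You assert that the points must be placed non-uniformly and that for an equally spaced configuration ``no divergence survives.'' The construction reproduced in the paper does the opposite: the centres are the \emph{equally spaced} nodes $A_k=\frac{4\pi k}{2n+1}$, and what is chosen arithmetically is the sequence of \emph{frequencies}: $m_1=n$ and $2m_k+1=\lambda_k(2n+1)$ with $\lambda_k$ odd and increasing. Since $(m_k+\tfrac12)A_j=2\pi\lambda_k j$, one gets $L_{m_k}(x-A_j)=\frac{\sin\left((m_k+\frac12)x\right)}{2\sin\left((x-A_j)/2\right)}$ for \emph{every} $j$: the oscillating factor is common to all terms, while for $x\in D_{k-1}$ the amplitudes $1/\left(2\sin\left((x-A_j)/2\right)\right)$ with $j\ge k$ all have one sign, and against the equal masses $2/n$ they produce a harmonic sum of order $\log n$. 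The terms with $j\le k-1$ are not cancelled by any cleverness in point placement; they are killed by the inductive condition (\ref{Eq 5.2}), i.e.\ each $m_k$ is chosen so large, after $m_1,\dots,m_{k-1}$ are fixed, that the previously built intervals contribute less than $1$ to $S_{m_k}(x,\phi_n)$ on $D_{k-1}$; the set $E_n$ is then obtained by further discarding the small set where $|\sin\left((m_k+\frac12)x\right)|$ is small. The only non-uniformity in the construction lies in the widths $2/m_k^2$ of the intervals $\Delta_k$ (equivalently the heights $m_k^2/n$), not in their centres; and the uniformity demanded in (c) is automatic, since for fixed $n$ the whole construction, hence $\alpha$, $m_1=n$ and $m_n$, is independent of $\varepsilon$, with $r_x=m_k\in[n,m_n]$ for $x\in D_{k-1}$. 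Without the resonance relation $2m_k+1=\lambda_k(2n+1)$ and the inductive condition (\ref{Eq 5.2}), your outline has no route to the lower bound, so the heart of the lemma is missing from the proposal.
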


The proof is based on the following constructions of the functions $\phi_{n}.$
Let
\[
A_{k}=\frac{4\pi k}{2n+1}\text{ }\left(  k=1, 2, \cdots, n\right)
\]
and suppose that%
\[
\lambda_{1}=1, \lambda_{2}, \cdots, \lambda_{n},
\]
is an increasing sequence of odd numbers, chosen as detailed below. Let
\[
m_{1}=n, 2m_{k}+1=\lambda_{k}(2n+1)\text{ if }k=2, \cdots, n,
\]
and define non-overlapping intervals
\[
\Delta_{k}=\left(  A_{k}-\frac{1}{m_{k}^{2}},A_{k}+\frac{1}{m_{k}^{2}}\right)
\text{ \ }\left(  k=1, 2, \cdots, n\right)  .
\]
Let%
\[
\phi_{n}(x)=\left\{
\begin{array}
[l]{ll}%
m_{k}^{2}/n, &x\in\Delta_{k}\text{ }\left(  k=1,2, \cdots, n\right)  ,\\
0, &x\in[0,2\pi]\setminus \cup_{k=1}^{n}\Delta_{k},\\
\phi_{n}(x+2\pi)=\phi_{n}(x), &x\in\mathbb{R}.
\end{array}
\right.
\]
For $n\geq2$ let%
\[
D_{k}=\left[  A_{k}+\frac{1}{n\log n},A_{k+1}-\frac{1}{n\log n}\right]  \text{
\ }\left(  k=1,2,\cdots, n-1\right)
\]
and set%
\[
H_{n}=\cup_{k=1}^{n-1}D_{k}.
\]
The $m_{k}$ are defined inductively as follows: suppose we have $\lambda
_{1}< \cdots <\lambda_{k-1}$ $(k\geq2)$ and correspondingly $m_{1}< \cdots <m_{k-1}.$ We
may then choose $m_{k}$ so large that%
\begin{equation}
\left\vert \frac{1}{\pi}\int\nolimits_{\cup_{j=1}^{k-1}\Delta_{j}}\phi
_{n}(t)L_{m_{k}}(t-x)dt\right\vert <1\text{ \ }\label{Eq 5.2}%
\end{equation}
for all $x\in D_{k-1},$ where $L_{m_{k}}$ is the Dirichlet kernel of order
$m_{k}:$%
\[
L_{m_{k}}(t)=\frac{\sin\left(  m_{k}+\frac{1}{2}\right)  t}{2\sin(t/2)}.
\]
The choice of $m_{k}$ may be made as follows. For $x\in D_{k-1}$ and
$t\in\Delta_{j},$ )$(j=1, 2, \cdots, k-1)$,%
\[
\left\vert t-x\right\vert >\frac{1}{n\log n}-\frac{1}{n^{2}}>\frac{1}{2n\log
n};
\]
and the function%
\[
t\longmapsto\frac{\phi_{n}(t)}{2\sin\left\{  \left(  t-x\right)  /2\right\}  }%
\]
is bounded on every $\Delta_{j}$ $\left(  j=1, \cdots, k-1\right).$ Thus the
integral%
\[
\int\nolimits_{\Delta_{j}}\frac{\phi_{n}(t)}{2\sin\left\{  \left(  t-x\right)
/2\right\}  }\sin\left(  m_{k}+\frac{1}{2}\right)  (t-x)dt
\]
can be made as small as desired if $m_{k}$ is sufficiently large.

The examples of Kolmogorov and Marcinkiewicz are of the following form:
\[
K(x)=\sum\nolimits_{k=1}^{\infty}\frac{\phi_{n_{k}}(x)}{\sqrt{\log n_{k}}}%
\]
and%
\[
M(x)=\sum\nolimits_{k=1}^{\infty}\frac{\phi_{n_{k}}(x)}{\log n_{k}},
\]
respectively, where the sequence of integers $n_{k}$ is strictly increasing
and satisifes a number of conditions (see \cite{Bar}, pp. 437-439). For $K$ we
have%
\[
\overline{\lim}_{n\rightarrow\infty}\left\vert S_{n}(x,K)\right\vert
=\infty\text{ a.e.,}%
\]
while
\[
\overline{\lim}_{n\rightarrow\infty}\left\vert S_{n}(x,M)\right\vert
<\infty\text{ a.e.}%
\]
Note that in the construction involved in the proof of Lemma \ref{Lemma 5.1}
the symmetric intervals $\left(  A_{k}-\frac{1}{m_{k}^{2}},A_{k}+\frac
{1}{m_{k}^{2}}\right)  $ may be replaced by the non-symmetric intervals
$\left(  A_{k},A_{k}+\frac{2}{m_{k}^{2}}\right)  .$

In view of (\ref{Eq 5.1}) it is clear that the Kolmogorov example belongs to
some Lebesgue space with variable exponent $p$ with $p(x)>1$ a.e.; the same
holds for the Marcinkiewicz example. More information is provided by the
following theorem.

\begin{theorem}
\label{Theorem 5.2}There exists $p\in\mathcal{P}(\mathbb{T}),$ with
$1<p(x)<\infty$ a.e., such that the space of continuous functions
$C(\mathbb{T})$ is a closed subspace of $L^{p(\cdot)}$ $(\mathbb{T})$ and both
the Kolmogorov and the Marcinkiewicz example belong to $L^{q(\cdot)}$
$(\mathbb{T}),$ where $q$ is the conjugate of $p.$
\end{theorem}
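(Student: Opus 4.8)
The plan is to take $p$ to be the (torus analogue of the) exponent $\widetilde{p}$ constructed in Section~3, with $q=\widetilde{q}$ its conjugate, and to arrange the free parameters of that construction so that the supports of the Kolmogorov and Marcinkiewicz functions are adapted to the intervals $(r_k,r_k+\delta_k)$. Observe first that the assertion ``$C(\mathbb{T})$ is a closed subspace of $L^{p(\cdot)}(\mathbb{T})$'' holds for \emph{every} admissible choice of a dense sequence $\{r_k\}$ and of widths $\{\delta_k\}$: the proof of Theorem~\ref{theorem 3.2} uses only the density of $\{r_k\}$ to produce, in each interval $(a,b)$, a point $s_k$ with $(s_k,s_k+\delta_k)\subset(a,b)$, whence $\|\chi_{(a,b)}\|_{p(\cdot)}>1/e$, and the conclusion follows from Theorem~\ref{theorem 3.1} exactly as in Corollary~\ref{cor3.1}. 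That $1<p(x)<\infty$ a.e.\ is inherited from $\widetilde{p}$. So the entire task is to place $K$ and $M$ in $L^{q(\cdot)}(\mathbb{T})$.

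Here I would exploit the remark preceding the theorem: in the construction of Lemma~\ref{Lemma 5.1} the symmetric bumps may be replaced by the left-aligned intervals $\bigl(A_k,A_k+2/m_k^2\bigr)$, on each of which $\phi_n$ is a positive constant. Thus $K=\sum_k \phi_{n_k}/\sqrt{\log n_k}$ and $M=\sum_k \phi_{n_k}/\log n_k$ are, after rescaling $\mathbb{T}$ to $[0,1]$, countable sums of positive constant bumps, each of the form $c\,\chi_{(A,A+w)}$ whose left endpoint $A=2k/(2n+1)$ is rational and whose width $w=2/m_k^2$ can be made as small as we please, since in Lemma~\ref{Lemma 5.1} the $m_k$ are only required to be sufficiently large. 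I would therefore run both constructions with the $m_k$ chosen so large that the bump widths are summable against the logarithmic weight, i.e.\ $\sum_{\text{bumps}}\int_0^{w}\log(1/x)\,dx<\infty$; this is possible because only finitely many bumps have width exceeding any given $\varepsilon$.

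Next I would choose the dense sequence $\{r_k\}$ to contain every left endpoint of every bump occurring in $K$ or $M$ (a countable set, already consisting of rationals), setting $\delta_k$ equal to the largest width of a bump starting at $r_k$, and taking $\delta_k$ tiny for the remaining $r_k$. Since finitely many bumps exceed any fixed width, this gives $\delta_k\to0$, and the summability arranged above yields $\sum_k\int_0^{\delta_k}\log(1/x)\,dx<\infty$, so $\{\delta_k\}$ is admissible and $p=\widetilde{p}$ is well defined. With this choice, grouping the bumps of $K$ by their common left endpoint writes $K=\sum_k f_k$, where $f_k$ is the sum of all bumps with left endpoint $r_k$; being a superposition of positive constants on intervals $(r_k,r_k+w)$ all sharing the left endpoint $r_k$ with $w\le\delta_k$, each $f_k$ is non-negative, non-increasing on $(r_k,r_k+\delta_k)$ and vanishes outside it. Overlaps among the intervals are harmless, since Lemma~\ref{Lemma 4.1} and the estimate \eqref{Eq 4.1} use only the pointwise bound $\widetilde{p}\ge 1+m(\cdot-r_k)\chi_{(r_k,r_k+\delta_k)}$, valid regardless of overlaps. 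Lemma~\ref{Lemma 4.1} then gives $\|K\|_{q(\cdot)}\approx\|K\|_{L^1}$, and the identical decomposition applies to $M$; as both Kolmogorov's and Marcinkiewicz's functions are integrable, both lie in $L^{q(\cdot)}(\mathbb{T})$.

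I expect the main obstacle to be the simultaneous bookkeeping: one must check that the construction of $\widetilde{p}$ and those of $K$ and $M$ can be synchronised so that the single summability constraint on $\{\delta_k\}$ is compatible with each $\delta_k$ dominating the relevant bump width, and that replacing symmetric by left-aligned bumps does not disturb the divergence properties of Lemma~\ref{Lemma 5.1} — the latter being exactly what the remark guarantees. The delicate point is purely the matching of supports; once every bump starts precisely at some $r_k$, the monotonicity needed to invoke Lemma~\ref{Lemma 4.1} is automatic.
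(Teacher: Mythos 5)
Your proposal is correct and follows essentially the same route as the paper: the paper likewise takes the dense sequence to be the bump left endpoints $t_n^k=4\pi k/(2n+1)$, invokes the remark allowing the non-symmetric bumps $(A_k,A_k+2/m_k^2)$, chooses the widths $\delta_n^k$ and the $m_k$ in \eqref{Eq 5.2} so that each bump sits inside its interval $(t_n^k,t_n^k+\delta_n^k)$ with the summability condition \eqref{Eq 3.1} preserved, and then concludes via Lemma~\ref{Lemma 4.1} together with Theorems~\ref{theorem 3.1} and \ref{theorem 3.2}. The only difference is bookkeeping order (the paper fixes the $\delta_n^k$ first and then takes $m_k$ large enough, while you fix the $m_k$ first and define the $\delta_k$ from the bump widths), which is immaterial.
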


\begin{proof}
Let
\[
t_{n}^{k}=\frac{4\pi k}{2n+1}\text{ \ }\left(  n\in\mathbb{N}%
,k=1,...,n\right)  ;
\]
the set $\left\{  t_{n}^{k}:n\in\mathbb{N},k=1,...,n\right\}  $ is a dense
subset of $\left[  0,2\pi\right]  .$ Noting that%
\[
\sum\nolimits_{n=1}^{\infty}\int\nolimits_{0}^{2/n^{2}}\log(1/x)dx<\infty,
\]
define $\delta_{n}^{k}$ $\left(  n\in\mathbb{N},k=1, \cdots, n\right)  $ in such a
way that $\delta_{n}^{1}=2/n^{2}$ and
\[
\sum\nolimits_{n=1}^{\infty}\sum\nolimits_{k=1}^{n}\int\nolimits_{t_{n}^{k}%
}^{t_{n}^{k}+\delta_{n}^{k}}m\left(  x-t_{n}^{k}\right)  dx<\infty.
\]
Now define the exponent $p$ by%
\[
p(x)=2+\sum\nolimits_{n=1}^{\infty}\sum\nolimits_{k=1}^{n}m\left(  x-t_{n}%
^{k}\right)  \chi_{\left(  t_{n}^{k},t_{n}^{k}+\delta_{n}^{k}\right)  }(x).
\]
Given any fixed $n\in\mathbb{N},$ choose the numbers $m_{k}$ in the estimate
(\ref{Eq 5.2}) so that
\[
t_{n}^{k}+\frac{2}{m_{k}^{2}}<\delta_{n}^{k}\text{ }\left(  k=2, \cdots, n\right)
.
\]
Finally, observe that by Lemma \ref{Lemma 4.1} both $K$ and $M$ belong to
$L^{q(\cdot)}$ $(\mathbb{T}),$ where $q$ is the conjugate of $p.$
\end{proof}

{\bf Acknowledgement.} We thank the anonymous referee for his/her remarks, which have improved the
final version of this paper.

\end{document}